\documentclass{amsart}
\usepackage{amssymb}
\usepackage[bookmarks=true]{hyperref}

\newcommand{\NN}{\mathbb N}
\newcommand{\RR}{\mathbb R}
\newcommand{\diverg}[2]{D\left(#1 \Vert #2\right)}
\newcommand{\PP}{\mathbb P}

\newtheorem{thm}{Theorem}
\newtheorem{lem}[thm]{Lemma}
\newtheorem{prop}[thm]{Proposition}
\newtheorem{rmk}[thm]{Remark}

\title[An Algorithm for Finding Positive Solutions to Polynomial Equations]{An
Algorithm for Finding Positive \\ Solutions to Polynomial Equations}
\author{Dustin Cartwright}
\address{Dept.\ of Mathematics, University of California, Berkeley, CA 94720,
USA}
\email{dustin@math.berkeley.edu}

\begin{document}
\begin{abstract}
We present a numerical algorithm for finding real non-negative
solutions to polynomial equations. Our methods are based on the expectation
maximization and iterative proportional fitting algorithms, which are used in
statistics to find maximum likelihood parameters for certain classes of
statistical models. Since our algorithm works by iteratively improving an
approximate solution, we find approximate solutions in the cases when there are
no exact solutions, such as overconstrained systems.
\end{abstract}
\maketitle

We present an iterative numerical method for finding non-negative solutions and
approximate solutions to systems of polynomial equations. We require two
assumptions about our system of equations. First, for each equation, all the
coefficients other than the constant term must be non-negative. Second, there is
a technical assumption on the exponents, described at the beginning of
Section~\ref{sec:alg}, which, for example, is satisfied if all non-constant
terms have the same total degree. In Section~\ref{sec:universality}, there is a
discussion of the range of possible systems which can arise under these
hypotheses.

Because of the assumption on signs, we can write our system of equations as,
\begin{equation} \label{eqn:main}
\sum_{\alpha \in S} a_{i\alpha}
x^\alpha = b_{i} \quad\mbox{for } i = 1, \ldots, \ell,
\end{equation}
where the coefficients
$a_{i\alpha}$ are non-negative and the $b_i$ are positive, and
$S \subset \RR_{\geq 0}^n$ is a finite set of possibly non-integer
multi-indices.
Our algorithm works by iteratively decreasing the generalized Kullback-Leibler
divergence of the left-hand side and right-hand side of~(\ref{eqn:main}).
The generalized Kullback-Leibler divergence of two positive vectors $a$ and~$b$
is defined to be
\begin{equation} \label{eqn:gen-kl}
D(a \Vert b) := \sum_{i} \left( b_i \log\left(\frac{b_i}{a_i}\right) -
b_i + a_i\right).
\end{equation}
The standard Kullback-Leibler consists only of the first term and is defined
only for probability distributions, i.e.\ the sum of each vector is~$1$. The
last two terms are necessary so that the generalized divergence
has, for arbitrary positive vectors $a$ and~$b$, the property of being
non-negative and zero exactly when $a$ and~$b$ are equal
(Proposition~\ref{prop:nonneg}).

Our algorithm converges to local minima of the Kullback-Leibler divergence,
including exact solutions to the system~(\ref{eqn:main}).  In
order to find multiple local minima, we can repeat the algorithm for randomly
chosen starting points. For finding approximate solutions, this may be
sufficient.
However, there are no guarantees of completeness for the exact solutions
obtained in this way.
Nonetheless, we hope that in certain situations, our algorithm will find
applications both for finding exact and approximate solutions.

Lee and Seung applied the EM algorithm to the problem of non-negative matrix
factorization~\cite{lee-seung-alg}. They introduced the generalized
Kullback-Leibler divergence in~(\ref{eqn:gen-kl}) and used it to find
approximate non-negative matrix factorizations. Since the product of
two matrices can be expressed by polynomials in the entries of the matrices,
matrix-factorization is a special case of the equations in~(\ref{eqn:main}).

For finding exact solutions to arbitrary systems of polynomials, there are
a variety of approaches which find all complex
or all real solutions. Homotopy continuation methods
find all complex roots of a system of equations~\cite{verschelde}. 
Even to find only positive roots, these two methods finds all complex or all
real solutions, respectively.
Lasserre, Laurent, and Rostalski have applied semidefinite programming to find
all real solutions to a system of equations and a slight modificiation of their
algorithm will find all positive real solutions~\cite{llr2008,llr2009}.
Nonetheless, neither of these methods has any notion of approximate
solutions.

For directly finding only positive real solutions, Bates and
Sottile have proposed an algorithm based on fewnomials bounds on the number of
real solutions~\cite{bates-sottile}. However, their method is only effective
when the number of monomials (the set $S$ in our notation) is only slightly more
than the number of variables.  Our method only makes weak assumptions on the set
of monomials, but stronger assumptions on the coefficients.

Our inspiration comes from tools for maximum likelihood estimation in
statistics. Parameters which maximize the likelihood are exactly the
parameters such that the model probabilities are closest to the empirical
probabilities, in the sense of mimimizing Kullback-Leibler divergence.
Expectation-Maximization~\cite[Sec.\ 1.3]{ascb} and Iterative Proporitional
Fitting~\cite{darroch-ratcliff} are well-known iterative methods for maximum
likelihood estimation. We re-interpret these algorithms as methods for
approximating solutions to polynomial equations, in which case their
applicability can be somewhat generalized.

The impetus behind the work in this paper was the need to find approximate
positive solutions to systems of bilinear equations in~\cite{cartwright}. In
this case the variables represented expression levels, which only made sense as
positive parameters. Moreover, in order to accomodate noise in the data, there
were more equations than variables, so it was necessary to find approximate
solutions. Thus, the algorithm described in this paper was the most appropriate
tool. Here, we generalize beyond bilinear equations and present proofs.

An implementation of our algorithm in the C programming language
is freely available at \url{http://math.berkeley.edu/~dustin/pos/}.

In Section~\ref{sec:alg}, we describe the algorithm and the connection to
maximum likelihood estimation. In Section~\ref{sec:convergence}, we prove that
the necessary convergence for our algorithm. Finally, in
Section~\ref{sec:universality}, we show that even with our restrictions on the
form of the equations, there can be exponentially positive real solutions.

\section{Algorithm}\label{sec:alg}

We make the assumption that we have an $s \times n$ non-negative
matrix $g$, with no column identically zero, and positive real numbers~$d_j$ for
$1 \leq j \leq s$ such that for each $\alpha \in S$ and each $j \leq s$,
$\sum_{i=1}^n g_{ji} \alpha_i$ is either $0$ or $d_j$.  For example, if all the
monomials $x^\alpha$ have the same total degree $d_1$, we can take $s=1$ and
$g_{1i} = 1$ for all~$i$. The other case of particular interest is multilinear
systems of equations, in which each 
$\alpha_i$ is at most one. In this case the variables can be partitioned into
sets such that the equations are linear in each set of variables, so we can take
$d_j =1$ for all~$j$. Note that because $d_j$ is in the denominator
in~(\ref{eqn:alg-update}), convergence is fastest when the $d_j$ are small, such
as in the multilinear case. We also note that, for an
arbitrary set of exponents~$S$, there may not exist such a~$g$.

The algorithm begins with a randomly chosen starting vector and iteratively
improves it through two nested iterations:
\begin{enumerate}
\let\olditem=\item\def\item{\olditem[$\bullet$]}
\item Initialize $x$ with $n$ randomly chosen positive real numbers.
\item Loop until the vector $x$ stabilizes:
\begin{enumerate}
\item For all $\alpha \in S$, compute
\begin{equation}\label{eqn:alg-w}
w_{\alpha} := \sum_{i} b_i \frac{a_{i\alpha} x^\alpha}{\sum_{\beta}
a_{i\beta}x^\beta}.
\end{equation}
\item Loop until the vector $x$ stablizes:
\begin{enumerate}
\item Loop for $j$ from $1$ to $s$:
\begin{enumerate}
\item Simultaneously update all entries of $x$:
\begin{equation}\label{eqn:alg-update}
x_i \leftarrow x_i \left(\frac{\sum_\alpha \alpha_i g_{ji} w_{\alpha}}
{\sum_{\alpha} \alpha_i g_{ji} a_{\alpha} x^\alpha}\right)^{g_{ji}
/d_j} \quad
\mbox{where } a_\alpha = \sum_{i} a_{i\alpha}.\hspace{-1.5in} 
\end{equation}
\end{enumerate}
\end{enumerate}
\end{enumerate}
\end{enumerate}
Because there is no subtraction, it is clear that the entries of~$x$ remain
positive throughout this algorithm.

Our method is inspired by interpreting the equations in (\ref{eqn:main}) as a
maxmimum likelihood problem for a statistical model and applying the well-known
methods of Expectation-Maximization (EM) and Iterative Proportional Fitting
(IPF). Here, we assume that all the monomials $x^\alpha$ have the same total
degree. Our statistical model is that a hidden process generates an integer $i \leq \ell$ and an exponent
vector $\alpha$ with joint probability~$a_{i\alpha} x^\alpha$. The
vector
$x$ contains $n$ positive parameters for the model, restricted such that the
total probability $\sum_{i,\alpha} a_{i\alpha} x^\alpha$ is $1$. The empirical
data consists of repeated observations of the integer~$i$, but not the
exponent~$\alpha$, and $b_i$ is the proportion of observations of~$i$. In this
situation, the vector $x$ which minimizes the divergence of~(\ref{eqn:main}) is
the maximum likelihood parameters for the empirical distribution~$b_i$. The
inner loop of the algorithm consists of using IPF to solve the log-linear hidden
model and the outer loop consists of using EM to estimate the distribution on
the hidden states.

\section{Proof of convergence}\label{sec:convergence}

In this section we prove our main theorem:

\begin{thm}\label{thm:main}
The Kullback-Leibler divergence
\begin{equation}\label{eqn:diverg-main}
\sum_{i=1}^\ell \diverg{\sum_{a\in S} a_{i\alpha}x^\alpha}{b_i}.
\end{equation}
is weakly decreasing during the algorithm in Section~\ref{sec:alg}.
Moreover, assuming that the set $S$ contains a multiple of each unit
vector~$e_i$, i.e.\ some power of each~$x_i$ appears in the system of equations,
then the vector $x$ coverges to a critical point of the
function~(\ref{eqn:diverg-main}) or the boundary of the positive orthant.
\end{thm}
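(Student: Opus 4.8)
The plan is to treat the algorithm as a two-level majorize--minimize scheme and to exhibit, at each level, an auxiliary function that majorizes the objective and whose minimizer is exactly the stated update. Write $L_i(x) = \sum_{\alpha \in S} a_{i\alpha} x^\alpha$ and let $F(x) = \sum_i \diverg{L_i(x)}{b_i}$ be the objective~(\ref{eqn:diverg-main}). Since $\diverg{L_i(x)}{b_i} = b_i \log b_i - b_i \log L_i(x) + L_i(x) - b_i$ and the terms $b_i \log b_i - b_i$ are constant, I would first record that minimizing $F$ is equivalent to minimizing the reduced objective $R(x) = \sum_i \bigl(L_i(x) - b_i \log L_i(x)\bigr)$, and then show that one full pass of the outer loop (one E-step~(\ref{eqn:alg-w}) followed by the inner loop) can only decrease $R$, which yields the first assertion.

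For the outer loop I would build the standard EM auxiliary function. Setting $q_{i\alpha} = a_{i\alpha}(x')^\alpha / L_i(x')$ at the current point $x'$, concavity of $\log$ (Jensen's inequality) gives $\log L_i(x) \ge \sum_\alpha q_{i\alpha} \log\bigl(a_{i\alpha} x^\alpha / q_{i\alpha}\bigr)$ with equality at $x = x'$. Substituting this bound and discarding terms independent of $x$, the majorizer of $R$ reduces to
\[
\Phi(x) = \sum_\alpha a_\alpha x^\alpha - \sum_\alpha w_\alpha\,(\alpha \cdot \log x), \qquad a_\alpha = \sum_i a_{i\alpha},
\]
where $w_\alpha = \sum_i b_i q_{i\alpha}$ is precisely the quantity~(\ref{eqn:alg-w}) computed in the E-step and $\alpha \cdot \log x = \sum_i \alpha_i \log x_i$. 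Because $\Phi$ majorizes $R$ with equality at $x'$, any decrease of $\Phi$ during the inner loop descends to a decrease of $F$.

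It then remains to show that each substep~(\ref{eqn:alg-update}) weakly decreases $\Phi$, and here I would invoke the exponent hypothesis on $g$. For fixed $j$, writing $x_i = x'_i e^{u_i}$ and using $\sum_i g_{ji}\alpha_i \in \{0, d_j\}$, the monomials with $\sum_i g_{ji}\alpha_i = d_j$ satisfy $\sum_i (g_{ji}\alpha_i/d_j) = 1$, so convexity of the exponential yields the separable upper bound $x^\alpha \le (x')^\alpha \sum_i (g_{ji}\alpha_i/d_j)\,(x_i/x'_i)^{d_j/g_{ji}}$, while the monomials with $\sum_i g_{ji}\alpha_i = 0$ involve no $x_i$ with $g_{ji} > 0$ and are therefore constant during the $j$-substep. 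Combining this with the already separable term $-\sum_\alpha w_\alpha(\alpha \cdot \log x)$ produces an auxiliary function for $\Phi$ that decouples across the coordinates; setting its partial derivatives to zero gives $(x_i/x'_i)^{d_j/g_{ji}} = \frac{\sum_\alpha \alpha_i w_\alpha}{\sum_\alpha \alpha_i a_\alpha (x')^\alpha}$, which is exactly~(\ref{eqn:alg-update}). Chaining the two majorizations proves monotonicity.

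For the second assertion, monotonicity together with Proposition~\ref{prop:nonneg} (so $F \ge 0$) shows that $F(x_t)$ converges. The hypothesis that some multiple $c\,e_i$ of each unit vector lies in $S$ makes $x_i^{\,c}$ a monomial of $F$, so $L_i(x) - b_i \log L_i(x) \to \infty$ as $x_i \to \infty$; hence the sublevel sets of $F$ are bounded and the iterates cannot escape to infinity. The stationarity computation above shows that the update map fixes $x'$ exactly when $\sum_\alpha \alpha_i a_\alpha (x')^\alpha = \sum_\alpha \alpha_i w_\alpha$ for all $i$, which is the vanishing of $\nabla F$; thus every interior fixed point is a critical point and $F$ strictly decreases off the fixed-point set. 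A global-convergence argument in the style of Zangwill then forces every interior limit point of the bounded sequence to be a critical point, leaving the boundary of the positive orthant as the only alternative. I expect the genuinely delicate step to be this last one: the per-pass update composes the E-step with an inner loop that is itself iterative, so I must verify that the composite map is closed and that the strict-decrease property survives the composition, and I must rule out drift along the critical set; it is precisely to accommodate the remaining possibility of coordinates tending to~$0$ that the theorem allows convergence to the boundary.
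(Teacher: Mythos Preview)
Your monotonicity argument is correct and, once unpacked, coincides with the paper's. Your EM majorizer $\Phi(x)=\sum_\alpha a_\alpha x^\alpha-\sum_\alpha w_\alpha(\alpha\cdot\log x)$ is exactly $\sum_\alpha \diverg{a_\alpha x^\alpha}{w_\alpha}$ up to an additive constant, which is the auxiliary divergence the paper works with in Lemmas~\ref{lem:ipf} and~\ref{lem:em}; your Jensen step for the outer loop is the same content as Lemma~\ref{lem:em} (the paper obtains it from the normalization identity of Lemma~\ref{lem:normalizing} instead), and your convexity-of-exponential bound for the inner step is literally the weighted AM--GM inequality that the paper invokes at the end of the proof of Lemma~\ref{lem:ipf}. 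So for the first assertion the two proofs are the same argument in different clothing, with your MM phrasing having the pleasant feature that the update~(\ref{eqn:alg-update}) emerges as the exact minimizer of a surrogate rather than as something whose descent must be checked after the fact.

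Where you diverge from the paper is in the convergence part, and here the paper's route is more concrete. The paper does \emph{not} invoke a Zangwill-type closed-map theorem. Instead, Lemma~\ref{lem:ipf} is stated with an explicit quantitative slack: each inner substep decreases the auxiliary divergence by at least
\[
\frac{1}{d_j}\sum_{i}\diverg{\textstyle\sum_\alpha \alpha_i g_{ji} a_\alpha x^\alpha}{\textstyle\sum_\alpha \alpha_i g_{ji} w_\alpha}.
\]
Since the objective is bounded below, the per-step decreases must tend to zero, hence this slack term tends to zero; by Proposition~\ref{prop:nonneg} its arguments---which are, up to a power, the numerator and denominator of the update ratio in~(\ref{eqn:alg-update})---must coalesce, forcing successive iterates to approach one another. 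Combined with the boundedness coming from the $c\,e_i\in S$ hypothesis, this yields convergence of $x$ and identifies any interior limit as a fixed point, hence a critical point via Proposition~\ref{prop:ipf-fixed}. Your proposal recovers monotonicity but discards this quantitative residual, which is precisely why you are left needing closedness of a composite point-to-set map and a strict-descent-off-the-fixed-set property that you yourself flag as delicate. If you want to complete your version without Zangwill, the cleanest fix is to retain the slack: note that your inner surrogate $\Psi$ satisfies $\Phi(x')-\Phi(x_{\text{new}})\ge \Psi(x')-\Psi(x_{\text{new}})$, and the latter equals the paper's $C$ term, so you already have the needed estimate implicitly.
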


\begin{rmk}
The condition on~$S$ is necessary to ensure that the vector $x$ remains bounded
during the algorithm.
\end{rmk}

We begin by establishing several basic properties of the generalized
Kullback-Leibler divergence in Proposition~\ref{prop:nonneg} and
Lemmas~\ref{lem:scaling} and~\ref{lem:normalizing}.
Note that $\diverg{a}{b} =
\sum_{i=1}^m \diverg{a_i}{b_i}$ and thus, we will check these basic
properties by reducing to the case where $a$ and $b$ are scalars.

The proof of Theorem~\ref{thm:main} itself is divided into two parts,
corresponding to the two nested iterative loops. The first step is
to prove that
the updates~(\ref{eqn:alg-update}) in the inner loop converge a local minimum of
the divergence $\diverg{a_{\alpha} x^\alpha}{w_\alpha}$.  The second step is to
show that this implies that the outer loop strictly decreases the divergence
function~(\ref{eqn:diverg-main}) exact at a critical point.

\begin{prop} \label{prop:nonneg}
For $a$ and~$b$ vectors of positive real numbers, the divergence $\diverg{a}{b}$
is non-negative with $\diverg{a}{b} = 0$ if and only if $a = b$.
\end{prop}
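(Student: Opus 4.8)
The plan is to reduce to the scalar case, as the excerpt already suggests via the additivity $\diverg{a}{b} = \sum_i \diverg{a_i}{b_i}$. Since each summand depends only on the pair $(a_i, b_i)$, and the total divergence is a sum of scalar divergences, it suffices to prove the scalar statement: for positive reals $a$ and $b$, the quantity $b\log(b/a) - b + a$ is non-negative and vanishes exactly when $a = b$. Non-negativity of the whole vector divergence then follows by summing non-negative terms, and the equality condition follows because a sum of non-negative terms is zero if and only if each term is zero, which scalar-wise forces $a_i = b_i$ for every $i$, i.e.\ $a = b$.

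First I would fix $b > 0$ and regard $f(a) := b\log(b/a) - b + a$ as a function of $a$ on $(0, \infty)$. I would compute $f'(a) = -b/a + 1 = (a - b)/a$, so $f'$ is negative for $a < b$, zero at $a = b$, and positive for $a > b$. Hence $f$ is strictly decreasing on $(0, b)$ and strictly increasing on $(b, \infty)$, so it attains a strict global minimum at $a = b$. Since $f(b) = b\log(1) - b + b = 0$, we conclude $f(a) \geq 0$ with equality if and only if $a = b$. This calculus argument is clean and essentially self-contained; an equivalent route is the elementary inequality $\log t \leq t - 1$ (with equality iff $t = 1$) applied to $t = a/b$, which after multiplying by $b$ and rearranging yields $b\log(b/a) \geq b - a$, i.e.\ $f(a) \geq 0$, again with equality exactly at $a = b$.

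I do not expect a serious obstacle here, as the proposition is a standard convexity/strict-convexity fact about generalized relative entropy. The only point requiring a little care is the equality case in the vector statement: one must observe that since each scalar term $\diverg{a_i}{b_i}$ is individually non-negative, their sum vanishes precisely when every term vanishes, and by the strict scalar minimum this happens exactly when $a_i = b_i$ for all $i$. The strictness of the scalar inequality (that $a = b$ is the \emph{only} minimizer) is what upgrades non-negativity to the sharp ``if and only if,'' so I would make sure the monotonicity argument, or the strict case of $\log t \leq t - 1$, is invoked rather than just the non-strict bound.
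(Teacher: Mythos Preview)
Your proof is correct and follows essentially the same approach as the paper: both reduce to the scalar case via additivity and then verify the scalar inequality by elementary calculus. The only cosmetic difference is that the paper rewrites the scalar divergence as $a\int_1^{b/a} \log s\, ds$ to read off non-negativity and the equality case directly, whereas you minimize $f(a)$ via its derivative (or invoke $\log t \le t-1$); these are equivalent one-line arguments.
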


\begin{proof}
It suffices to prove this when $a$ and $b$ are scalars. We let $t
= b/a$, and,
\begin{equation*}
\diverg{a}{b} = b \log\left(\frac{b}{a}\right) - b + a
 = a (t\log t - t + 1) = a \int_{1}^t \log s \, ds.
\end{equation*}
It is clear that $\int_1^t \log s\, ds$ is non-negative and equal to zero if and
only if $t = 1$.
\end{proof}

\begin{lem} \label{lem:scaling}
Suppose that $a$ and~$b$ are vectors of $m$ positive real numbers.
Let $t$ be any positive real number, and then
\begin{equation*}
\diverg{ta}{b} = \diverg{a}{b} + (t-1) \sum_{i=1}^m a_i - \sum_{i=1}^m b_i \log t
\end{equation*}
\end{lem}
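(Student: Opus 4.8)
The plan is to prove this identity by a direct computation from the definition~(\ref{eqn:gen-kl}), following the reduction-to-scalars strategy indicated just before Proposition~\ref{prop:nonneg}. Since $\diverg{a}{b} = \sum_{i=1}^m \diverg{a_i}{b_i}$, and since scaling the vector $a$ by $t$ simply scales each component $a_i$ by the same factor $t$, it suffices to establish the scalar identity $\diverg{ta_i}{b_i} = \diverg{a_i}{b_i} + (t-1)a_i - b_i \log t$ and then sum over $i$. The sums $\sum_i (t-1)a_i$ and $\sum_i b_i \log t$ will then assemble exactly into the two correction terms appearing in the statement, with $\log t$ pulling out of the latter sum as a common constant to give $-\log t \sum_i b_i$.

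For the scalar step I would write out $\diverg{ta_i}{b_i} = b_i \log(b_i/(ta_i)) - b_i + t a_i$ directly from the definition, and then use the additive property of the logarithm to split $\log(b_i/(ta_i)) = \log(b_i/a_i) - \log t$. The leading term then reproduces the $b_i \log(b_i/a_i)$ appearing in $\diverg{a_i}{b_i}$, the $-b_i$ term is unchanged, and the $+t a_i$ term differs from the $+a_i$ in $\diverg{a_i}{b_i}$ by precisely $(t-1)a_i$. Collecting the remaining pieces $-b_i \log t$ and $(t-1)a_i$ yields the claimed corrections, completing the scalar case.

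There is no genuine obstacle here: the result is an algebraic bookkeeping identity whose only nontrivial ingredient is the factorization of the logarithm. The one point worth keeping in mind is that $t$ is a single scalar common to every component, so after summing it factors cleanly out of the $b_i \log t$ terms; this is exactly what lets the sum collapse into the form $-\sum_{i=1}^m b_i \log t$ written in the statement rather than something involving component-dependent scalings.
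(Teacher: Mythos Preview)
Your proposal is correct and follows exactly the approach the paper takes: reduce to the scalar case using $\diverg{a}{b} = \sum_i \diverg{a_i}{b_i}$ and then carry out the straightforward computation from the definition. You have simply written out the details of what the paper calls ``a straightforward computation.''
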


\begin{proof}
Again, we assume that $a$ and $b$ are scalars and then it becomes a
straightforward computation.
\end{proof}

\begin{lem} \label{lem:normalizing}
If $a$ and $b$ are vectors of $m$ positive real numbers, then we 
can relate their divergence to the divergence of their sums by
\begin{equation*}
\diverg{a}{b} = \diverg{\textstyle\sum_{i=1}^m a_i}{\textstyle\sum_{i=1}^m b_i}
+ \diverg{\frac{\sum_{i=1}^m  b_i}{\sum_{i=1}^m a_i} a}{b}.
\end{equation*}
\end{lem}

\begin{proof}
We let $A= \sum_{i=1}^m a_i$ and $B= \sum_{i=1}^m b_i$, and apply
Lemma~\ref{lem:scaling} to the last term:
\begin{align*}
\diverg{\frac{B}{A}a}{b} &= \diverg{a}{b} + \left(\frac{B}{A} - 1\right)A - B
\log \frac{B}{A} \\
&= \diverg{a}{b} - B \log\frac{B}{A} + B - A = \diverg{a}{b} - \diverg{A}{B}.
\end{align*}
After rearranging, we get the desired expression.
\end{proof}

\begin{lem} \label{lem:ipf}
The update rule~(\ref{eqn:alg-update}) weakly decreases the divergence.
If
\begin{equation*}
x_i' = x_i \left(\frac{\sum_\alpha \alpha_i g_{ji} w_{\alpha}}
{\sum_{\alpha} \alpha_i g_{ji} a_{\alpha} x^\alpha}\right)^{g_{ji}
/d_j},
\end{equation*}
then
\begin{equation} \label{eqn:ipf-ineq}
\sum_\alpha \diverg{a_\alpha (x')^\alpha}{w_\alpha} \leq
\sum_\alpha \diverg{a_\alpha x^\alpha}{w_\alpha} -
\frac{1}{d}\sum_{i = 1}^n \diverg{\sum_\alpha \alpha_i g_{ji} a_\alpha
x^\alpha}
{\sum_\alpha \alpha_i g_{ji} w_\alpha}.
\end{equation}
\end{lem}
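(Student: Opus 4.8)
The plan is to regard the right-hand side as an objective $F(x) := \sum_\alpha \diverg{a_\alpha x^\alpha}{w_\alpha}$ and to bound the change $F(x')-F(x)$ caused by one update. First I would expand each scalar divergence via the definition~(\ref{eqn:gen-kl}), discard the terms that do not involve $x$, and write $x^\alpha = \prod_i x_i^{\alpha_i}$, so that up to an additive constant
\begin{equation*}
F(x) = -\sum_\alpha w_\alpha \sum_i \alpha_i \log x_i + \sum_\alpha a_\alpha x^\alpha .
\end{equation*}
The logarithmic piece is linear in each $\log x_i$ and so transforms exactly under the multiplicative update, whereas the monomial piece is convex in $x$ and will require an inequality.

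The key step is to control $\sum_\alpha a_\alpha (x')^\alpha$. Writing the update as $x_i' = x_i\,\rho_i^{g_{ji}/d_j}$ with $\rho_i = \left(\sum_\alpha \alpha_i g_{ji} w_\alpha\right)\big/\left(\sum_\alpha \alpha_i g_{ji} a_\alpha x^\alpha\right)$, one has $(x')^\alpha = x^\alpha \prod_i \rho_i^{\alpha_i g_{ji}/d_j}$. Here I would invoke the structural assumption from Section~\ref{sec:alg}: for each $\alpha$ either $\sum_i \alpha_i g_{ji} = 0$, in which case $(x')^\alpha = x^\alpha$ and the term is unaffected, or $\sum_i \alpha_i g_{ji} = d_j$, in which case the numbers $\alpha_i g_{ji}/d_j$ are non-negative and sum to $1$. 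For the latter terms the weighted arithmetic--geometric mean inequality gives $\prod_i \rho_i^{\alpha_i g_{ji}/d_j} \le \sum_i (\alpha_i g_{ji}/d_j)\,\rho_i$, and after interchanging the order of summation and using the definition of $\rho_i$ to cancel the denominator,
\begin{equation*}
\sum_\alpha a_\alpha (x')^\alpha \le \frac{1}{d_j}\sum_i \rho_i \sum_\alpha \alpha_i g_{ji} a_\alpha x^\alpha = \frac{1}{d_j}\sum_i \sum_\alpha \alpha_i g_{ji} w_\alpha .
\end{equation*}
This is the crux of the argument and the step I expect to be the main obstacle, since it is exactly where the hypothesis on the exponents is needed; the same identity $\sum_i \alpha_i g_{ji}/d_j = 1$ also lets me rewrite $\sum_\alpha a_\alpha x^\alpha$ as $\frac{1}{d_j}\sum_i \sum_\alpha \alpha_i g_{ji} a_\alpha x^\alpha$ and likewise reorganize the logarithmic piece.

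Finally I would assemble the two contributions. Setting $A_i := \sum_\alpha \alpha_i g_{ji} a_\alpha x^\alpha$ and $W_i := \sum_\alpha \alpha_i g_{ji} w_\alpha$, so that $\rho_i = W_i/A_i$, the logarithmic contribution to $F(x')-F(x)$ equals $-\frac{1}{d_j}\sum_i W_i \log(W_i/A_i)$ exactly, while the monomial contribution is at most $\frac{1}{d_j}\sum_i (W_i - A_i)$ by the previous step. Adding these,
\begin{equation*}
F(x') - F(x) \le -\frac{1}{d_j}\sum_i \bigl( W_i \log(W_i/A_i) - W_i + A_i \bigr),
\end{equation*}
and recognizing the bracketed expression as the scalar divergence $\diverg{A_i}{W_i}$ from~(\ref{eqn:gen-kl}) gives precisely~(\ref{eqn:ipf-ineq}) with $d = d_j$. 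That this is genuinely a weak decrease then follows from the non-negativity of each $\diverg{A_i}{W_i}$ established in Proposition~\ref{prop:nonneg}.
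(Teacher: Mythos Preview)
Your plan is correct and follows essentially the same route as the paper: decompose the change in $\sum_\alpha \diverg{a_\alpha x^\alpha}{w_\alpha}$ into a logarithmic part, which transforms exactly under the multiplicative update, and a monomial part $\sum_\alpha a_\alpha (x')^\alpha - \sum_\alpha a_\alpha x^\alpha$, which is bounded via the weighted AM--GM inequality using that the weights $\alpha_i g_{ji}/d_j$ sum to~$1$. The only difference is cosmetic: the paper first performs a change of variables to reduce to the normalized case $d_j=1$, $g_{ji}=1$ (absorbing the coordinates with $g_{ji}=0$ into the coefficients and dropping the exponents with $\sum_i \alpha_i g_{ji}=0$) and then carries out the identical computation, whereas you work directly with the general indices.
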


\begin{proof}
First, since the statement only depends on the $j$th row of the matrix $g$, we
can assume that $g$ is a row vector and we drop $j$ from future subscripts.
Second, we can assume that $d=1$ by replacing $g_i$ with $g_i/d$.

Third, we reduce to the case when $g_{i} = 1$ for all $i$. We define a new set
of exponents~$\tilde
\alpha$ and coefficients~$\tilde a_{\tilde \alpha}$ by $\tilde\alpha_i = g_i \alpha_i$ and 
$\tilde a_\alpha = a_\alpha \prod x_i$, where the product is taken over all
indices $i$ such that $g_i = 0$.
We take $\tilde x$ to be a vector indexed by those $i$ such that $g_i \neq
0$. Then, under the change of coordinates $\tilde
x_i = x_i^{1/g_i}$, we have $a_\alpha x^\alpha = \tilde a_\alpha \tilde
x^{\tilde \alpha}$ and
the update rule in~(\ref{eqn:alg-update}) is the same for the new system with
coefficients $\tilde a_{\tilde \alpha}$ and exponents $\tilde \alpha$.
Furthermore, if all entries of $\tilde\alpha$ are zero, then $\tilde
x^{\tilde\alpha}=1$ for all vectors~$x$ and so we can drop $\tilde\alpha$ from
our exponent set.
Therefore, for the rest of the proof, we drop the tildes, and assume that
$\sum_i \alpha_i = 1$ for all $\alpha \in S$ and $g_i =
1$ for all $i$, in which case $g$ drops out of the equations.

To prove the desired inequality, we substitute the updated assignment $x'$ into the
definition of Kullback-Leibler divergence:
\begin{align}
\diverg{a_\alpha (x')^\alpha}{w_\alpha} &=
w_\alpha \log\left(\frac{w_\alpha}{a_\alpha x^\alpha \prod_{i=1}^n
\left(\frac{\sum_{\beta} w_\beta}{\sum_\beta \beta_i a_\beta
x^\beta}\right)^{\alpha_i}}\right) - w_\alpha
+ a_\alpha (x')^\alpha \notag \\
&= w_\alpha \log\frac{w_\alpha}{a_\alpha x^\alpha}
- \sum_{i=1}^n \alpha_i w_\alpha \log\frac{\sum_{\beta} \beta_i
w_\beta}{\sum_\beta \beta_i a_\beta x^\beta}
- w_\alpha
+ a_\alpha (x')^\alpha \notag \\
&= \diverg{a_\alpha x^\alpha}{w_\alpha}
- \sum_{i=1}^n \alpha_i w_\alpha \log\frac{\sum_{\beta} \beta_i
w_\beta}{\sum_\beta \beta_i a_\beta x^\beta}
- a_\alpha x^\alpha
+ a_\alpha (x')^\alpha. \label{eqn:ipf-kl}
\end{align}

On the other hand, let $C$ denote the last term of~(\ref{eqn:ipf-ineq}), which
we can expand as,
\begin{align}
 C &= \sum_{i = 1}^n \diverg{\sum_\alpha \alpha_i a_\alpha
x^\alpha}
{\sum_\alpha \alpha_i w_\alpha} \notag \\
&= 
\sum_{i=1}^n \left(\left(\sum_{\alpha} \alpha_i w_\alpha \right)
\log\frac{\sum_{\alpha} \alpha_i w_\alpha}{\sum_{\alpha}\alpha_i a_\alpha
x^\alpha}
- \sum_\alpha \alpha_i w_\alpha
+ \sum_\alpha \alpha_i a_\alpha x^\alpha \right) \notag \\
&= \sum_{i=1}^n \sum_{\alpha} \left(\alpha_i w_\alpha
\log \frac{\sum_\beta \beta_i w_\beta}{\sum_\beta \beta_i a_\beta x^\beta}
- \alpha_i w_\alpha
+ \alpha_i a_\alpha x^\alpha \right) \notag \\
&= \sum_{\alpha} \left(\sum_{i=1}^n \alpha_i w_\alpha
\log\frac{\sum_\beta \beta_i w_\beta}{\sum_\beta \beta_i a_\beta x^\beta}\right)
- w_\alpha
+ a_\alpha x^\alpha, \label{eqn:expansion-c}
\end{align}
where the last step follows from the assumption that
that $\sum_i \alpha_i = 1$ for all $\alpha \in S$.
We take the sum of~(\ref{eqn:ipf-kl}) over all $\alpha \in S$
and add it to~(\ref{eqn:expansion-c}) to get,
\begin{equation}\label{eqn:ipf-kl-final}
\sum_\alpha \diverg{a_\alpha(x')^\alpha}{w_\alpha}
+ C
= \sum_\alpha \diverg{a_\alpha x^\alpha}{w_\alpha}
- \sum_\alpha b_\alpha
+ \sum_\alpha a_\alpha (x')^\alpha.
\end{equation}

Finally, we expand the last term of~(\ref{eqn:expansion-c}) using the definition
of $x'$ and apply the arithmetic-geometric mean inequality,
\begin{align*}
\sum_\alpha a_\alpha (x')^\alpha &=
\sum_\alpha a_\alpha x^\alpha \prod_{i=1}^n\left(
\frac{\sum_\beta \beta_i b_\beta}{\sum_\beta \beta_i a_\beta
x^\beta}\right)^{\alpha_i} \\
&\leq \sum_{\alpha} a_\alpha x^\alpha \sum_{i=1}^n
\alpha_i \frac{\sum_\beta \beta_i
b_\beta}{\sum_\beta \beta_i a_\beta x^\beta} \\
&= \sum_{i=1}^n \left(\sum_{\alpha} \alpha_i a_\alpha x^\alpha \right)
\frac{\sum_\beta \beta_i b_\beta}{\sum_\beta \beta_i a_\beta x^\beta} \\
&= \sum_{i=1}^n \sum_\beta \beta_i b_\beta = \sum_\beta b_\beta.
\end{align*}
Together with~(\ref{eqn:ipf-kl-final}), this gives the desired inequality.
\end{proof}

\begin{prop}\label{prop:ipf-fixed}
A positive vector $x$ is a fixed point of the update rule~(\ref{eqn:alg-update})
for all $1 \leq j \leq s$ if and only if $x$ is a critical point of the
divergence function $\sum_{\alpha} \diverg{a_\alpha x^\alpha}{w_\alpha}$.
\end{prop}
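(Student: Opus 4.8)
The plan is to show that the fixed-point condition and the critical-point condition both reduce to the same system of equations, namely $\sum_\alpha \alpha_i w_\alpha = \sum_\alpha \alpha_i a_\alpha x^\alpha$ for every index~$i$. Throughout, the $w_\alpha$ are treated as constants, since they are computed once in the outer loop and held fixed during the inner iteration being analyzed here.

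First I would compute the gradient of $F(x) := \sum_\alpha \diverg{a_\alpha x^\alpha}{w_\alpha}$. Expanding the definition~(\ref{eqn:gen-kl}), the $\alpha$-summand is $w_\alpha \log w_\alpha - w_\alpha \log(a_\alpha x^\alpha) - w_\alpha + a_\alpha x^\alpha$, and using $\log x^\alpha = \sum_k \alpha_k \log x_k$ together with $\partial x^\alpha/\partial x_i = \alpha_i x^\alpha / x_i$ gives
\[
\frac{\partial F}{\partial x_i} = \frac{1}{x_i}\sum_\alpha \alpha_i\left(a_\alpha x^\alpha - w_\alpha\right).
\]
Because $x$ is a positive vector, $\partial F/\partial x_i = 0$ is equivalent to $\sum_\alpha \alpha_i a_\alpha x^\alpha = \sum_\alpha \alpha_i w_\alpha$, so $x$ is a critical point of $F$ precisely when this holds for all~$i$.

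Second I would analyze the fixed-point condition. For fixed~$j$, the update~(\ref{eqn:alg-update}) multiplies $x_i$ by the $(g_{ji}/d_j)$-th power of the ratio $\big(\sum_\alpha \alpha_i g_{ji} w_\alpha\big)\big/\big(\sum_\alpha \alpha_i g_{ji} a_\alpha x^\alpha\big)$. The factor $g_{ji}$ does not depend on~$\alpha$ and cancels, so when $g_{ji}\neq 0$ this ratio equals $\big(\sum_\alpha \alpha_i w_\alpha\big)\big/\big(\sum_\alpha \alpha_i a_\alpha x^\alpha\big)$, a strictly positive number, and the exponent $g_{ji}/d_j$ is strictly positive. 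Hence, for a positive $x_i$, the update leaves $x_i$ unchanged if and only if either $g_{ji} = 0$ (in which case the update is trivially the identity) or $\sum_\alpha \alpha_i w_\alpha = \sum_\alpha \alpha_i a_\alpha x^\alpha$. Requiring $x$ to be fixed for every~$j$ therefore forces the latter equation for each~$i$ such that $g_{ji}\neq 0$ for some~$j$; by the standing hypothesis that $g$ has no identically zero column, every~$i$ admits such a~$j$, so the fixed-point condition is exactly $\sum_\alpha \alpha_i w_\alpha = \sum_\alpha \alpha_i a_\alpha x^\alpha$ for all~$i$. Comparing with the first step yields the equivalence.

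I expect the gradient computation to be routine; the points requiring care are organizational. One must keep the $w_\alpha$ fixed, and confirm that the denominators $\sum_\alpha \alpha_i a_\alpha x^\alpha$ are strictly positive at a positive~$x$, so that the ratios and hence the update are well defined. The genuinely load-bearing step is the use of the no-zero-column assumption on~$g$ to pass from ``the equation holds for each~$i$ constrained by some row of~$g$'' to ``the equation holds for all~$i$.'' This is what makes the fixed-point condition across all~$j$ coincide with the full critical-point system, rather than with a proper subset of those equations.
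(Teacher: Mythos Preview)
Your proposal is correct and follows essentially the same route as the paper: both arguments compute the partial derivatives of the divergence to obtain the system $\sum_\alpha \alpha_i a_\alpha x^\alpha = \sum_\alpha \alpha_i w_\alpha$, then observe that the fixed-point condition for the update rule reduces to this same system once one cancels the common factor $g_{ji}$ and invokes the hypothesis that $g$ has no zero column. Your write-up is in fact slightly more explicit than the paper's about why the $g_{ji}=0$ case is harmless and why the no-zero-column assumption is the load-bearing step.
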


\begin{proof}
For the update rule to be constant means that the numerator and denominator in
(\ref{eqn:alg-update}) are equal, i.e.
\begin{equation}\label{eqn:suff-equal}
\sum_\alpha \alpha_i g_{ji} a_\alpha x^\alpha
= \sum_\alpha \alpha_i g_{ji} w_\alpha
\quad\mbox{for all $i$ and $j$.}
\end{equation}
By our assumption on $g$, for each $i$, some
$g_{ji}$ is non-zero, so~(\ref{eqn:suff-equal}) is equivalent to
\begin{equation}\label{eqn:suff-equal2}
\sum_\alpha \alpha_i a_\alpha x^\alpha
= \sum_\alpha \alpha_i w_\alpha \quad\mbox{for all } i.
\end{equation}
On the other hand, we compute the partial derivative
\begin{equation*}
\frac{\partial}{\partial x_i} \sum_\alpha \diverg{a_\alpha x^\alpha}{w_\alpha}
= \sum_{\alpha }
-w_\alpha \frac{\alpha_i}{x_i}
+ \alpha_i a_\alpha \frac{x^\alpha}{x_i}.
\end{equation*}
Since each $x_i$ is assumed to be non-zero, it is clear that all partial
derivatives being zero is equivalent to~(\ref{eqn:suff-equal2}).
\end{proof}

\begin{lem}\label{lem:em}
If we define $w_\alpha$ as in (\ref{eqn:alg-w}), then
\begin{multline*}
\sum_{i=1}^n \diverg{\sum_\alpha a_{i\alpha} (x')^\alpha}{b_i}
- \sum_{i=1}^n \diverg{\sum_\alpha a_{i\alpha} x^\alpha}{b_i} \\
\leq \sum_{\alpha} \diverg{a_{\alpha}(x')^\alpha}{w_{\alpha}}
- \sum_{\alpha} \diverg{a_{\alpha}x^\alpha}{w_{\alpha}}.
\end{multline*}
Moreover, a positive vector~$x$ is a fixed point if and only if $x$ is a
critical point for the divergence function.
\end{lem}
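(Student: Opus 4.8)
The plan is to run the standard Expectation--Maximization minorization argument, applying the normalizing identity of Lemma~\ref{lem:normalizing} twice. The key device is to refine the weights by setting $w_{i\alpha} := b_i\, a_{i\alpha} x^\alpha / \sum_\beta a_{i\beta} x^\beta$, so that $w_\alpha = \sum_i w_{i\alpha}$ by~(\ref{eqn:alg-w}), and to introduce the single ``joint'' auxiliary quantity $\tilde Q(y) := \sum_{i,\alpha} \diverg{a_{i\alpha} y^\alpha}{w_{i\alpha}}$ indexed by pairs $(i,\alpha)$. The aim is to sandwich the two objectives in the lemma against this one quantity: the observed (left-hand) objective~(\ref{eqn:diverg-main}) from one application of Lemma~\ref{lem:normalizing}, and the complete (right-hand) objective $\sum_\alpha \diverg{a_\alpha y^\alpha}{w_\alpha}$ from a second.

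First I would fix $i$ and apply Lemma~\ref{lem:normalizing} to the vectors $(a_{i\alpha} y^\alpha)_\alpha$ and $(w_{i\alpha})_\alpha$ indexed by $\alpha$. Since $\sum_\alpha w_{i\alpha} = b_i$ while the first vector sums to $\sum_\alpha a_{i\alpha} y^\alpha$, this yields
\[ \diverg{\textstyle\sum_\alpha a_{i\alpha} y^\alpha}{b_i} \leq \sum_\alpha \diverg{a_{i\alpha} y^\alpha}{w_{i\alpha}}, \]
the gap being a single divergence, non-negative by Proposition~\ref{prop:nonneg}. Crucially, at $y = x$ the rescaling factor $b_i / \sum_\alpha a_{i\alpha} x^\alpha$ sends $(a_{i\alpha} x^\alpha)_\alpha$ to exactly $(w_{i\alpha})_\alpha$, so the gap vanishes and the inequality is an equality. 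Summing over $i$ bounds the observed objective above by $\tilde Q(y)$, with equality at the current point $x$.

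Next I would fix $\alpha$ and apply Lemma~\ref{lem:normalizing} again, now to $(a_{i\alpha} y^\alpha)_i$ and $(w_{i\alpha})_i$ indexed by $i$. Because $\sum_i a_{i\alpha} y^\alpha = a_\alpha y^\alpha$ and $\sum_i w_{i\alpha} = w_\alpha$, the leading term is precisely $\diverg{a_\alpha y^\alpha}{w_\alpha}$, and the remainder is $\diverg{(w_\alpha a_{i\alpha}/a_\alpha)_i}{(w_{i\alpha})_i}$. The point I want to exploit is that this remainder is \emph{independent of $y$}: the monomial $y^\alpha$ cancels against the normalizing ratio $w_\alpha/(a_\alpha y^\alpha)$. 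Hence $\tilde Q(y)$ differs from the complete objective by a constant, so their increments between $x$ and $x'$ coincide. Combining the two steps, the observed objective increment satisfies $\tilde Q(x') - \tilde Q(x)$ as an upper bound (using equality at $x$ and the inequality at $x'$), and this equals $\sum_\alpha \diverg{a_\alpha (x')^\alpha}{w_\alpha} - \sum_\alpha \diverg{a_\alpha x^\alpha}{w_\alpha}$, the right-hand side of the lemma.

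For the fixed-point claim, I would compute $x_i\,\partial/\partial x_i$ of the observed objective and check, using $w_\alpha = x^\alpha \sum_i b_i a_{i\alpha} / \sum_\beta a_{i\beta} x^\beta$ to simplify, that it vanishes for every $i$ exactly when $\sum_\alpha \alpha_i a_\alpha x^\alpha = \sum_\alpha \alpha_i w_\alpha$. This is precisely condition~(\ref{eqn:suff-equal2}), which by Proposition~\ref{prop:ipf-fixed} characterizes the fixed points of the inner update; since $x$ is fixed for the whole iteration exactly when the inner loop leaves it unchanged, this identifies the fixed points with the critical points of~(\ref{eqn:diverg-main}). I expect the main obstacle to be bookkeeping rather than conceptual: setting up the two applications of Lemma~\ref{lem:normalizing} with the correct index being summed, and stating carefully the $y$-independence of the second remainder, since that cancellation is exactly what licenses replacing $\tilde Q$ by the complete objective in the increment.
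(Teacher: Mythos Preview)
Your proposal is correct and follows essentially the same argument as the paper: both introduce the refined weights $w_{i\alpha}$, apply Lemma~\ref{lem:normalizing} once over $\alpha$ (yielding a non-negative remainder vanishing at $y=x$) and once over $i$ (yielding a $y$-independent remainder), and combine these to get the inequality. The only minor difference is in the fixed-point claim, where the paper differentiates the resulting identity at $x'=x$ and invokes Proposition~\ref{prop:ipf-fixed}, whereas you compute the derivative of the observed objective directly to reach~(\ref{eqn:suff-equal2}); both routes are valid and equally short.
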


\begin{proof}
We consider
\begin{equation}\label{eqn:diverg-i-alpha}
\sum_{i, \alpha}\diverg{a_{i\alpha}(x')^\alpha}{w_{i\alpha}}
\qquad\mbox{where }
w_{i\alpha} = \frac{b_i a_{i\alpha} x^\alpha}{\sum_\beta
a_{i\beta} x^\beta},
\end{equation}
and apply Lemma~\ref{lem:normalizing} in two different ways.
First, by applying Lemma~\ref{lem:normalizing} to each group
of~(\ref{eqn:diverg-i-alpha}) with fixed
$\alpha$, we get
\begin{equation*}
\sum_{i, \alpha}\diverg{a_{i\alpha}(x')^\alpha}{w_{i\alpha}}
=\sum_{\alpha} \diverg{a_\alpha (x')^\alpha}{w_{\alpha}}
+ \sum_{i,\alpha} \diverg{\frac{\sum_{j} w_{j\alpha}}{\sum_j a_\alpha
(x')^\alpha} a_{i\alpha} (x')^\alpha}{w_{i\alpha}} .
\end{equation*}
In the last term, the monomials $(x')^\alpha$ cancel and so it is a constant
independent of $x'$ which we denote $E$.
On the other hand, we can apply Lemma~\ref{lem:normalizing} to each group
in~(\ref{eqn:diverg-i-alpha}) with fixed~$i$,
\begin{equation*}
\sum_{i, \alpha} \diverg{a_{i\alpha}(x')^\alpha}{w_{i\alpha}}
= \sum_i \diverg{\sum_{\alpha}a_{i\alpha}(x')^\alpha}{b_i}
+ \sum_{i,\alpha} \diverg{\frac{b_i a_{i\alpha} (x')^\alpha}
{\sum_\beta a_{i\beta} (x')^\beta}}{w_{i\alpha}}.
\end{equation*}
We can combine these equations to get
\begin{equation} \label{eqn:em-main}
\sum_{i} \diverg{\sum_{\alpha}a_{i\alpha}(x')^\alpha}{b_i}
= \sum_{\alpha}\diverg{a_{\alpha}(x')^\alpha}{w_{i\alpha}}
+ E
- \sum_{i,\alpha} \diverg{\frac{b_i a_{i\alpha} (x')^\alpha}
{\sum_{\beta} a_{i\beta} (x')^\beta}}{w_{i\alpha}}.
\end{equation}
By Proposition~\ref{prop:nonneg}, the last term of~\ref{eqn:em-main} is
non-negative, and by the definition of~$w_{i\alpha}$, it is zero for $x' = x$.
Therefore, any value of~$x'$ which decreases the
first term compared to~$x$ will also decrease the left hand side by at least as
much, which i sthe desired inequality.

In order to prove the statement about the derivative, we consider the derivative
of~(\ref{eqn:em-main}) at $x'=x$. Because the last term is mimimized at $x'=x$,
its derivative is zero, so
\begin{equation*}
\left.\frac{\partial}{\partial x_j'}\right\vert_{x'=x}
\sum_i \diverg{\sum_{\alpha} a_{i\alpha} (x')^\alpha}{b_i}
= \left.\frac{\partial}{\partial x_j'}\right\vert_{x'=x}
\sum_{i, \alpha}\diverg{a_{i\alpha}(x')^\alpha}{w_{i\alpha}}.
\end{equation*}
By Proposition~\ref{prop:ipf-fixed}, a positive vector $x$ is a fixed point of
the inner loop if and only if these partial derivatives on the right are zero
for all indices~$j$, which is the definition of a critical point.
\end{proof}

\begin{proof}[Proof of Theorem~\ref{thm:main}]
The Kullback-Leibler divergence $\sum_{\alpha}
\diverg{a_\alpha x^\alpha}{w_\alpha}$ decreases at each step of the inner loop
by Lemma~\ref{lem:ipf}.
Thus, by Lemma~\ref{lem:em}, the divergence
\begin{equation}\label{eqn:main-thm-diverg}
\sum_{i=1}^n \diverg{\sum_{\alpha} a_{i\alpha} x^\alpha}{b_i}
\end{equation}
decreases at least as much.
However, the divergence~(\ref{eqn:main-thm-diverg}) is non-negative
according to Proposition~\ref{prop:nonneg}. Therefore, the magnitude of
the decreases in divergence must approach zero over the course of
the algorithm. By Lemma~\ref{lem:ipf}, this means that the quantity $C$ in
that theorem must approach zero. By Proposition~\ref{prop:nonneg}, this means
that the quantities in that divergence approach each other. However, up to a
power, these are the numerator and denominator of the factor in the update
rule~(\ref{eqn:alg-update}), so the difference between consecutive vectors $x$
approaches zero.

Thus, we just need to show that $x$ remains bounded. However,
since some power of each variable $x_i$ occurs in some equation, as
$x_i$ gets large, the divergence for that equation also gets arbitrarily large.
Therefore, each $x_i$ must remain bounded, so the vector~$x$ must have a limit
as the algorithm is iterated. If this limit is in the interior of the positive
orthant, then it must be a fixed point. By Lemma~\ref{lem:em}
and Proposition~\ref{prop:ipf-fixed}, this fixed point must be a critical point of the
divergence~(\ref{eqn:diverg-main}).
\end{proof}

\section{Universality}\label{sec:universality}

Although the restriction on the exponents and especially the positivity of the
coefficients seem like strong conditions, such systems can nonetheless be quite
complex. In this section, we investigate the breadth of such equations.

\begin{prop}\label{prop:universal}
For any system of $\ell$ real polynomials in $n$ variables, there exists a
system of $\ell+1$ equations in $n+1$ variables, in the form~(\ref{eqn:main}),
such that the positive solutions $(x_1, \ldots, x_n)$ to the former system are
in bijection with the positive solutions $(x_1', \ldots, x_{n+1}')$ of the
latter, with $x_i' = x_i / x_{n+1}$.
\end{prop}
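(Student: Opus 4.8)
The plan is to construct the new system by homogenizing the original polynomials with respect to a new variable $x_{n+1}$ and then separating the positive and negative coefficients. Start with the given system of $\ell$ polynomials $p_k(x_1, \ldots, x_n) = 0$. Each $p_k$ is a sum of monomials with arbitrary real coefficients; I would first homogenize each $p_k$ to a common total degree $D$ (the maximum degree appearing across all terms in all the $p_k$) by multiplying each monomial $c\,x^\gamma$ by the appropriate power $x_{n+1}^{D - |\gamma|}$, where $|\gamma| = \sum_i \gamma_i$. After this step every term in equation $k$ has total degree exactly $D$ in the $n+1$ variables, so the technical assumption from Section~\ref{sec:alg} is satisfied with $s = 1$ and $g_{1i} = 1$, $d_1 = D$. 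Because homogenization multiplies the $k$th monomial by $x_{n+1}^{D - |\gamma|}$, evaluating at a point with $x_{n+1}' > 0$ and setting $x_i = x_i'/x_{n+1}'$ recovers $(x_{n+1}')^D \, p_k(x_1, \ldots, x_n)$, which vanishes exactly when $p_k(x) = 0$; this is where the claimed substitution $x_i' = x_i/x_{n+1}$ comes from.

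The remaining difficulty is the sign condition in~(\ref{eqn:main}): the homogenized polynomials still have arbitrary real coefficients, whereas the target form requires all non-constant coefficients to be non-negative and the constant term to be positive. To fix this, I would split each homogenized equation $\sum_\gamma c_{k\gamma} y^\gamma = 0$ (writing $y = (x_1', \ldots, x_{n+1}')$) into its positive and negative parts, moving the terms with $c_{k\gamma} < 0$ to the right-hand side. This gives $\sum_{c_{k\gamma} > 0} c_{k\gamma} y^\gamma = \sum_{c_{k\gamma} < 0} |c_{k\gamma}| y^\gamma$, an equation with non-negative coefficients on both sides but no constant term, which does not yet match~(\ref{eqn:main}) since the right side is positive rather than a fixed constant $b_k > 0$. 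The key trick is the extra variable and extra equation: I would add the normalizing equation $x_{n+1}' = 1$ (or more robustly a single equation forcing $x_{n+1}'$ to a positive constant), so that on the solution set $x_{n+1}' = 1$ and the homogenized monomials reduce to the original ones, while simultaneously using $x_{n+1}'$ to absorb the constant terms and guarantee positivity of the $b_i$.

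The main obstacle I anticipate is precisely reconciling the "homogeneous, both-sides-positive" equations with the required shape $\sum_\alpha a_{i\alpha} x^\alpha = b_i$ with a single \emph{positive constant} $b_i$ and \emph{non-negative non-constant} coefficients on the left only. The cleanest route is to use the $(\ell+1)$st equation to pin down $x_{n+1}'$ and to exploit homogeneity so that multiplying an equation through by a suitable power of $x_{n+1}'$ converts one of the homogeneous terms into the constant $b_i$: concretely, if after homogenization I divide by a monomial in $x_{n+1}'$ I can arrange for exactly one term to become a nonzero constant while keeping all exponents non-negative, provided $x_{n+1}'$ appears to positive degree in every term, which homogenization to degree $D > 0$ guarantees as long as $D$ exceeds every $|\gamma|$. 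I would therefore choose $D$ strictly larger than the maximal monomial degree so that $x_{n+1}'$ genuinely appears in every term, then verify that the bijection $x_i' = x_i/x_{n+1}'$, together with the normalization equation, makes the correspondence between positive solution sets exact in both directions. Checking that no spurious positive solutions are introduced and that the constant and sign requirements hold simultaneously is the delicate bookkeeping step, but it is routine once the homogenization degree and the normalizing equation are fixed.
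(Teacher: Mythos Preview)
Your homogenization step is correct and matches the paper exactly, including the observation that taking all monomials of a single degree $D$ satisfies the exponent hypothesis with $s=1$, $g_{1i}=1$, $d_1=D$. The gap is in the second half: none of your proposed devices for reaching the form $\sum_\alpha a_{i\alpha}(x')^\alpha = b_i$ with $a_{i\alpha}\geq 0$ and $b_i>0$ actually works. Splitting into positive and negative parts leaves a non-constant polynomial on the right. The normalizing equation $x_{n+1}'=1$ is degree~$1$, so it breaks the single-degree exponent condition you just arranged, and in any case it does nothing to alter the signs of the coefficients in the other equations. Finally, the ``divide by a power of $x_{n+1}'$'' idea cannot produce a constant term while keeping all exponents non-negative: the only monomial that becomes a constant after dividing by $x_{n+1}'^{\,k}$ is one of the form $c\,x_{n+1}'^{\,k}$ (coming from a constant term in the original polynomial), and forcing that requires $k=D$, whereupon every other term acquires the exponent $x_{n+1}'^{\,-|\gamma|}$ with $|\gamma|>0$.

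The paper's trick is different and simpler. The extra equation is not $x_{n+1}'=1$ but rather
\[
\sum_{\alpha'\in S'} (x')^{\alpha'} \;=\; 1,
\]
the sum of \emph{all} the homogenized monomials set equal to $1$; this is already degree~$D$ with all coefficients equal to~$1$, so it fits the required form. Now for each homogenized equation $\sum_{\alpha'} a_{i\alpha'}(x')^{\alpha'}=0$, pick any $b_i > -\min_{\alpha'} a_{i\alpha'}$ and add $b_i$ times the new equation to it, obtaining
\[
\sum_{\alpha'} \bigl(a_{i\alpha'} + b_i\bigr)(x')^{\alpha'} \;=\; b_i,
\]
which has every coefficient strictly positive and a positive constant on the right. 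Since this is an elementary row operation on the system, the solution set is unchanged, and the bijection with the original positive solutions is $x_i = x_i'/x_{n+1}'$ via the homogenization. The essential idea you were missing is that the normalizing equation should involve \emph{every} monomial in $S'$, so that adding a multiple of it shifts \emph{every} coefficient simultaneously.
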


\begin{proof}
We write our system of equations as $\sum_{\alpha \in S} a_{i\alpha} x^\alpha =
0$ for $1 \leq i \leq \ell$, where $S \subset \NN^n$ is an arbitrary finite set
of exponents and $a_{i\alpha}$ are any real numbers.  We let $d$ be the maximum
degree of any monomial $x^\alpha$ for $\alpha \in S$. We homogenize the
equations with a new variable~$x_{n+1}$. Explicitly, define $S' \subset
\NN^{n+1}$ to consist of $\alpha' = (\alpha, d - \sum_i \alpha_i)$ for all
$\alpha$ in~$S$ and we write $a_{i\alpha'} = a_{i \alpha}$. We add a new
equation with coefficients
$a_{\ell+1,\alpha} = 1$ for all $\alpha\in S'$ and $b_{\ell+1} = 1$. For this
system, we can clearly take $g_{1i} = 1$ and $d_1 = d$ to satisfy the condition
on exponents. Furthermore, for any positive solution $(x_1, \ldots, x_n)$ to the
original system of equations, $(x_1', \ldots, x_{n+1}')$ with $x_i' = x_i /
\big(\sum_\alpha x^\alpha\big)^{1/d}$ and $x_{n+1}' = 1/ \big(\sum_\alpha
x^\alpha \big)^{1/d}$ is a solution to
the homogenized system of equations.

Next, we add a multiple of the last equation to each of the others in order to
make all the coefficients positive. For each $1 \leq i \leq \ell$, choose a
positive $b_i > -\min_{\alpha}\{a_{i\alpha} \mid \alpha \in S'\}$, and define
$a_{i\alpha}' = a_{i\alpha} + b_i$. By construction, the resulting system has
all positive coefficients, and since the equations are formed from the previous
equations by elementary linear transformations, the set of solutions are the
same.
\end{proof}

The practical use of the construction in the proof of
Proposition~\ref{prop:universal} is mixed.
The first step, of homogenizing to deal with arbitrary sets of exponents, is a
straightforward way of guaranteeing the existence of the matrix~$g$. However,
for large systems, the second step tends to produce an ill-conditioned
coefficient matrix. In these cases, our algorithm converges very slowly.
Nonetheless, Proposition~\ref{prop:universal} shows that in the worst case,
systems satisfying our hypotheses can be as complicated as arbitrary polynomial
systems.

\begin{prop}
There exist bilinear equations in $2m$ variables with ${2m -2 \choose m-1}$
positive real solutions.
\end{prop}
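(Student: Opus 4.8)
The plan is to recognize the number $\binom{2m-2}{m-1}$ as a B\'ezout-type intersection number on a product of projective spaces, to read off from it why the counts of variables and equations are $2m$ and $2m-2$, and then to spend the real effort on realizing \emph{all} of the resulting complex solutions as totally positive points.

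First I would set up the system bihomogeneously. Split the $2m$ variables into two groups $x_1,\dots,x_m$ and $y_1,\dots,y_m$ and regard each group as homogeneous coordinates on a copy of $\PP^{m-1}$. A bilinear form $\sum_{i,j} a_{ij}\, x_i y_j$ is then exactly a divisor of bidegree $(1,1)$ on $\PP^{m-1}\times\PP^{m-1}$, whose class is $H_1+H_2$ for the pullbacks $H_1,H_2$ of the two hyperplane classes. Intersecting $2m-2 = \dim(\PP^{m-1}\times\PP^{m-1})$ generic such forms gives $(H_1+H_2)^{2m-2}$, and since $H_1^{m}=H_2^{m}=0$ while $H_1^{m-1}H_2^{m-1}=1$, only the middle term survives, so the generic number of complex solutions is $\binom{2m-2}{m-1}$ (equivalently, the mixed volume of $2m-2$ copies of $\Delta_{m-1}\times\Delta_{m-1}$). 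This also explains the bookkeeping: there are $2m$ homogeneous coordinates, $2m-2$ equations, and the two remaining degrees of freedom are absorbed by the independent positive rescalings of the two groups. I would fix those by passing to the positive chart $x_m=y_m=1$; dehomogenizing there moves the $x_m y_m$ monomial to the right as a positive constant, and after the sign adjustment used in the proof of Proposition~\ref{prop:universal} the remaining coefficients can be made positive, putting the system into the form~(\ref{eqn:main}).

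The substance of the proof is to choose explicit real coefficients for which all $\binom{2m-2}{m-1}$ solutions are real and lie in the positive chamber of $\PP^{m-1}\times\PP^{m-1}$. I would look for a coefficient pattern that linearizes the count combinatorially, guided by the Vandermonde identity $\binom{2m-2}{m-1}=\sum_{k=0}^{m-1}\binom{m-1}{k}^2$: the aim is a system whose positive solutions biject with pairs $(S,T)$ of equal-size subsets of an $(m-1)$-element index set, or equivalently with balanced monotone lattice paths of length $2m-2$. Concretely I would draw the forms from a totally positive template (for instance a Cauchy pattern $a_{ij}=1/(s_i+t_j)$ with separated increasing node sequences $s_i$ and $t_j$), so that the accompanying interlacing inequalities force each real branch of the solution set to meet the positive simplex exactly once and to stay real, with the reality and positivity checked branch by branch along this indexing.

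The hard part will be precisely this last step: guaranteeing that \emph{all} of the complex solutions, not merely finitely many, are real and totally positive, i.e. that the upper bound $\binom{2m-2}{m-1}$ is attained entirely inside the positive orthant. A robust way to secure it is a two-stage argument. First I would exhibit one admissible configuration in which the positive solutions can be counted directly, letting the node sequences degenerate so that the system decouples into the product structure suggested by the Vandermonde identity, where the positive solutions are transparent. Then I would deform the coefficients back to a generic configuration along a path avoiding the discriminant, using a Viro-style patchwork or a continuity argument to show that along the way no solution leaves the positive chamber or collides into a complex pair. Since the generic complex count is already the maximum $\binom{2m-2}{m-1}$, once the degenerate model is shown to carry that many positive solutions, lower semicontinuity of the number of nondegenerate positive solutions under such a deformation yields the claim.
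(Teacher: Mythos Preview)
Your B\'ezout setup on $\PP^{m-1}\times\PP^{m-1}$ and the dehomogenization/sign-fixing via Proposition~\ref{prop:universal} are both in line with the paper. Where you diverge is in the construction of the forms themselves, and here you are making life much harder than necessary.

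The paper does not look for a generic bilinear system whose complex zeros happen all to be real and positive. Instead it takes the $2m-2$ bilinear forms to be \emph{rank one}: one chooses generic linear forms $b_1,\dots,b_{2m-2}$ in $m$ variables and sets the $k$th equation to be
\[
b_k(x_1,\dots,x_m)\cdot b_k(x_{m+1},\dots,x_{2m})=0.
\]
Each equation visibly factors, so a solution in $\PP^{m-1}\times\PP^{m-1}$ is obtained by choosing, for every $k$, which factor vanishes. Pinning down a point on each $\PP^{m-1}$ requires exactly $m-1$ linear conditions, so the solutions are in bijection with the $(m-1)$-subsets $A\subset\{1,\dots,2m-2\}$ recording which $b_k$ vanish on the first factor. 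That is the count $\binom{2m-2}{m-1}$ directly, with no intersection theory and no deformation. Positivity is then a finite linear-algebra matter: the finitely many points of $\PP^{m-1}$ cut out by $(m-1)$-tuples of the $b_k$ can all be moved into the positive chamber by one linear change of coordinates.

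Your proposed route---pick a totally positive coefficient template, degenerate to a decoupled model suggested by the Vandermonde identity, then deform back while tracking that no solution escapes the positive chamber or collides---might be salvageable, but the step you yourself flag as the hard part is not carried out: neither the degenerate model nor the patchwork/continuity control is specified, and ``lower semicontinuity of the number of nondegenerate positive solutions'' is not a result you can invoke off the shelf. The paper's rank-one trick replaces that entire analytic program with a one-line combinatorial bijection; the insight you are missing is that factorizable bilinear forms already saturate the B\'ezout bound with explicitly real, explicitly locatable solutions.
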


\begin{proof}
We use a variation on the technique used to prove
Proposition~\ref{prop:universal}.

First, we pick $2m-2$ generic homogeneous linear functions $b_{1}, \ldots,
b_{2m-2}$ on $m$ variables.  By generic, we mean for any $m$ of the $b_k$, the
only simultaneous solution of all $m$ linear equations is the trivial one.  This
genericity implies that any $m-1$ of the $b_k$ define a point in $\PP^{m-1}$
By taking a linear changes of coordinates in each set of variables,
we can assume that all of these points are positive, i.e.\ have a representative
consisting of all positive real numbers.

Then we consider the system of equations
\begin{align}
 b_k(x_1, \ldots, x_m) \cdot b_k(x_{m+1}, \ldots, x_{n})
&= 0, \mbox{ for }1 \leq k \leq 2m-2  \label{eqn:bilinear-hom} \\
(x_1 + \ldots + x_m) (x_{m+1} + \ldots + x_{2m}) &= 1
\label{eqn:bilinear-dehom} \\
x_1 + \ldots + x_m &= 1. \label{eqn:linear-dehom}
\end{align}
The equations~(\ref{eqn:bilinear-hom}) are bihomogeneous and so we can think
of their
solutions in $\mathbb P^{m-1} \times \mathbb P^{m-1}$. There are exactly
${2m-2 \choose m-1}$ positive real solutions, corresponding to the subsets $A \subset
[2m-2]$ of size $m-1$. For any such~$A$, there is a unique, distinct solution
satisfying $b_k(x_1, \ldots, x_m) = 0$ for all $k$ in
$A$ and $b_k(x_{m+1}, \ldots, x_{2m}) = 0$ for all $k$ not in $A$.
By assumption, for each solution, all the coordinates can be chosen to be
positive.
The last two equations~(\ref{eqn:bilinear-dehom}) and~(\ref{eqn:linear-dehom})
dehomogenize the system in a way such that there are ${2m-2 \choose m-1}$
positive real solutions. Finally, as in the last paragraph of the proof of
Proposition~\ref{prop:universal}, we can add multiples
of~(\ref{eqn:bilinear-dehom}) to the equations~(\ref{eqn:bilinear-hom}) in order
to make all the coefficients positive.
\end{proof}

\section*{Acknowledgments}
We thank Bernd Sturmfels for reading a draft of this manuscript. This work was
supported by the Defense Advanced Research Projects Agency project ``Microstates
to Macrodynamics: A New Mathematics of Biology'' and the U.S.~National Science
Foundation (DMS-0456960).

\bibliographystyle{plain}
\bibliography{bilinear}

\begin{thebibliography}{1}

\bibitem{bates-sottile}
Dan Bates and Frank Sottile.
\newblock {Khovanskii}-{Rolle} continuation for real solutions.
\newblock arXiv:0908.4579, 2009.

\bibitem{cartwright}
Dustin~A. Cartwright, Siobhan~M. Brady, David~A. Orlando, Bernd Sturmfels, and
  Philip~N. Benfey.
\newblock Reconstruction spatiotemporal gene expression data from partial
  observations.
\newblock {\em Bioinformatics}, 25(19):2581--2587, 2009.

\bibitem{darroch-ratcliff}
J.~N. Darroch and D.~Ratcliff.
\newblock Generalized iterative scaling for log-linear models.
\newblock {\em Annals of Math. Stat.}, 43(5):1470--1480, 1972.

\bibitem{llr2008}
Jean~B. Lasserre, Monique Laurent, and Philipp Rostalski.
\newblock Semidefinite characterization and computation of real radical ideals.
\newblock {\em Foundations of Computational Mathematics}, 8(5):607--647, 2008.

\bibitem{llr2009}
Jean~B. Lasserre, Monique Laurent, and Philipp Rostalski.
\newblock A prolongation-projection algorithm for computing the finite real
  variety of an ideal.
\newblock {\em Theoretical Computer Science}, 410(27--29):2685--2700, 2009.

\bibitem{lee-seung-alg}
Daniel~D. Lee and H.~Sebastian Seung.
\newblock Algorithms for non-negative matrix factorization.
\newblock {\em Adv. Neural Info. Proc. Syst.}, 13:556--562, 2001.

\bibitem{ascb}
Lior Pachter and Bernd Sturmfels.
\newblock {\em Algebraic Statistics for Computational Biology}.
\newblock Cambridge University Press, 2005.

\bibitem{verschelde}
Jan Verschelde.
\newblock {PHCPACK}: A general-purpose solver for polynomial systems by
  homotopy continuation.

\end{thebibliography}

\end{document}